\newcolumntype{Y}{>{\centering\arraybackslash}X}
\newtheorem{thm}{Theorem}[section]
\newtheorem{cor}[thm]{Corollary}
\newtheorem{lem}[thm]{Lemma}
\newtheorem{prop}[thm]{Proposition}
\newtheorem{rmk}[thm]{Remark}
\newcommand{\BIG}{\bBigg@{2}}
\newcommand{\vast}{\bBigg@{3}}
\newcommand{\Vast}{\bBigg@{5}}
\numberwithin{equation}{section}
\begin{document}
\setlength{\arrayrulewidth}{0.1mm}



\title[On a Conjecture of Mordell]{On a Conjecture of Mordell}

\author[Debopam Chakraborty]{Debopam Chakraborty}
\address{Department of Mathematics\\ BITS-Pilani, Hyderabad campus\\
Hyderabad, INDIA}
\email{debopam112358@gmail.com}

\author[Anupam Saikia]{Anupam Saikia}
\address{Department of Mathematics\\ Indian Institute of Technology, Guwahati\\
Guwahati-781039, Assam, INDIA}
\email{a.saikia@iitg.ac.in}

\date{}

\begin{abstract}
A conjecture of Mordell in \cite{Mor} states that if $p$ is a prime and $p$ is congruent to $3$ mod $4$, then $p$ does not divide $y$ where $(x,y)$ is the fundamental solution to $x^{2}-py^{2}=1$. The conjecture has been verified in \cite{Beach} for primes not exceeding $10^{7}$. In this article, we show that Mordell's conjecture holds for four conjecturally infinite families of primes.
\end{abstract}

\subjclass[2010]{Primary 11D09, 11A55; Secondary 11R11, 11R27, 11R29}
\keywords{continued fraction; period; fundamental unit; class number}

\maketitle

\section{Introduction}

There is a famous conjecture of Ankeny, Artin and Chowla in \cite{AAC} concerning the fundamental unit of the real quadratic field $\mathbb{Q}(\sqrt{p})$ where $p$ is a prime congruent to $1$ mod $4$. Their conjecture states that if $\dfrac{x+y\sqrt{p}}{2}$ is the fundamental unit of $\mathbb{Q}(\sqrt{p})$ then $y$ is not divisible by $p$. Mordell's conjecture is very similar in nature to that of Ankeny, Artin and Chowla. Mordell's conjecture states that if $x+y\sqrt{p}$ is the fundamental unit of $\mathbb{Q}(\sqrt{p})$ for a prime $p$ congruent to $3$ modulo $4$, then $p$ does not divide $y$ (\cite{Mor}). In other words, it predicts that $p$ does not divide $y$ where $(x,y)$ is the fundamental solution to $x^{2}-py^{2}=1$ when $p\equiv 3$ (mod $4$).

The conjecture of Ankeny, Artin  and Chowla (A-A-C conjecture) was first verified up to primes $p<10^{11}$ by Van Der Poorten et al. in \cite{Poorten}. In a corrigenda to \cite{Poorten}, the authors reported verification of the A-A-C conjecture for primes up to $2.10^{11}$ in \cite{Poorten2}. In \cite{Mor1}, Modell proved the A-A-C conjecture for any regular prime $p$, i.e., when $p$ does not divide the class number of the number field $\mathbb{Q}(e^{\frac{2\pi i}{p}})$. It has been conjectured that there are infinitely many regular primes, in fact Siegel conjectured that nearly $60.65\%$ of primes are regular. Thus the A-A-C conjecture holds for the conjecturally infinite family of regular primes.

The conjecture of Mordell has also been verified for primes not exceeding $10^{7}$ in \cite{Beach}. In this article, we provide an equivalent criterion (Theorem \ref{mord}) for non-divisibility of $y$ by $p$. As a consequence, we show that Mordell's conjecture holds when the regular continued fraction (RCF) of $\sqrt{p}$ has period length $2$, $4$, $6$ or $8$ (Corollaries \ref{mord2} and \ref{mord8}). By a conjecture of P. Chowla and S. Chowla in \cite{CC}, there exist infinitely many primes $p$ such that the RCF of $\sqrt{p}$ has period length $k$ for any natural number $k$. Therefore, our work provides four conjecturally infinite families of primes for which Mordell's Conjecture holds. The existence of infinitely many primes $p$ such that the RCF of $\sqrt{p}$ has period length $2$, $4$ or $6$ also follows from an old conjecture due to Bouniakowsky (\cite{Bou}) as discussed in the final section of this article. It may be noted that Hashimoto (\cite{Hash}) proved the A-A-C conjecture when the RCF of $\sqrt{p}$ has period length $1$, $3$ or $5$. As a consequence of our approach, one can easily deduce that the central term in the RCF of $\sqrt{p}$ for $p\equiv 3$ mod $4$ is either $\lfloor\sqrt{p}\rfloor$ or $\lfloor\sqrt{p}\rfloor-1$, whichever is odd (Corollary \ref{central}). We further prove a sharper bound (Proposition \ref{per6}) for the denominator of the second convergent in the RCF of $\sqrt{p}$ that we need in our work.

\section{The convergents of $\sqrt{p}$}

 When $p$ is a prime, we know that the regular continued fraction (RCF) of $\sqrt{p}$ is
of the form (e.g., see \cite{D})
$$\sqrt{p}=n + \cfrac{1}{a_1 +\frac{1}{a_{2}+ \cdots +\frac{1}{a_{l-1}+\frac{1}{2n+\frac{1}{a_{1}+\frac{1}{a_{2}+\cdots}}}}}}, \;\;\;
\mbox{ where }n = \lfloor\sqrt{p}\rfloor, \;\;\mbox{ and }a_{i} =a_{l-i}.$$
We denote it as $\sqrt{p}=\langle{n,\overline{a_{1},a_{2},\ldots ,a_{l-1},2n}}\rangle.$
Here, the first $l-1$ terms $a_{1},a_{2},\ldots,a_{l-1}$ of the period $(a_{1},a_{2},\ldots ,a_{l-1},2n)$ form a palindrome.
We establish a deeper relation between the continued fraction of $\sqrt{p}$ and the fundamental unit of the real quadratic field $\mathbb{Q}(\sqrt{p})$ that yields our results.

We first establish a relation that hold for the convergents of the continued fraction of $\sqrt{p}$ for any prime $p$. Let
$$\sqrt{p}=\langle{n,\overline{a_{1},a_{2},\ldots,a_{l-1},2n}\rangle}.$$
The $i$-th convergent of the continued fraction of $\sqrt{p}$ is given by
\begin{equation}
\label{cgnt}
\frac{k_{i}}{h_{i}}=n+\frac{1}{a_1 +\frac{1}{a_{2} +\frac{1}{\cdots+\frac{1}{a_{i}}}}}.
\end{equation}
We can write the first few convergents as
\begin{equation}
\begin{split}
h_{0}&=1, \;\;\;\;k_{0}=n;\qquad \;\;\;\; h_{1}=a_{1},\;\;k_{1}=na_{1}+1;\\
\qquad h_{2}& =1+a_{1}a_{2}, \;\;\qquad \quad \;\;\;\;k_{2}=na_{1}a_{2}+n+a_{2}.
\end{split}
\end{equation}
By convention, we take $$h_{-1}=0, \qquad k_{-1}=1.$$
The following recurrence relations satisfied by $k_{i}$ and $h_{i}$ are easy to verify.
\begin{equation}
 \label{rr}
h_{i+1}=a_{i+1}h_{i}+h_{i-1}, \qquad k_{i+1}= a_{i+1}k_{i}+k_{i-1}\;\; \mbox{  for  }\;\;i\geq 2.
\end{equation}
It can be readily verified that
\begin{equation}
\label{hk}
k_{l-1}=nh_{l-1}+h_{l-2}.
\end{equation}
The following relations involving the convergents are well-known and can be easily proved (see \cite{D}):
\begin{equation}
\label{rec1}
\begin{split}
k_{i}h_{i-1} - k_{i-1}h_{i} = (-1)^{i-1}\qquad \mbox{  for  }{i\geq 0},\\
\big(nh_{l-1}+h_{l-2}\big)^{2}-ph_{l-1}^{2}=(-1)^{l-2}.
\end{split}
\end{equation}
It follows from the second relation above that if $p$ is congruent to $3$ modulo $4$ then the length $l$ of the period of $\sqrt{p}$ has to be even as otherwise, $-1$ would be a quadratic residue of $p$. In particular, the period of $\sqrt{p}$ for any prime congruent to $3$ modulo $4$ has to be even.

\noindent We establish a relation for convergents that we use later.
\begin{prop}
\label{rec2}
Let $l$ be the length of the period of the RCF of $\sqrt{p}$. Then $$h_{l-1} = h_{i}h_{l-1-i} + h_{i-1}h_{l-2-i} \mbox{    for   } 0 \leq i \leq l-2. $$
\end{prop}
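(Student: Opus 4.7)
The plan is to prove the identity by induction on $i$, relying on the recurrence \eqref{rr} (which in fact extends to all $i\ge 0$ with the conventions $h_{0}=1$, $h_{-1}=0$) together with the palindromic symmetry $a_{j}=a_{l-j}$ of the period. For the base case $i=0$, the asserted identity reads $h_{l-1}=h_{0}h_{l-1}+h_{-1}h_{l-2}$, which is immediate from those conventions.

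For the inductive step, suppose $h_{l-1}=h_{i}h_{l-1-i}+h_{i-1}h_{l-2-i}$ for some $i$ with $0\le i\le l-3$. Substituting $h_{i+1}=a_{i+1}h_{i}+h_{i-1}$ from the recurrence, I compute
\[
 h_{i+1}h_{l-2-i}+h_{i}h_{l-3-i}=h_{i}\bigl(a_{i+1}h_{l-2-i}+h_{l-3-i}\bigr)+h_{i-1}h_{l-2-i}.
\]
The palindrome identity yields $a_{i+1}=a_{l-1-i}$, and applying \eqref{rr} at the index $l-1-i$ gives $a_{l-1-i}h_{l-2-i}+h_{l-3-i}=h_{l-1-i}$. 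Hence the bracketed quantity is $h_{l-1-i}$, and the right-hand side collapses to $h_{i}h_{l-1-i}+h_{i-1}h_{l-2-i}$, which equals $h_{l-1}$ by the inductive hypothesis. This establishes the identity at index $i+1$ and completes the induction.

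I do not foresee a serious obstacle; the main care is in index bookkeeping, ensuring all subscripts remain $\ge -1$ (consistent with the stated range $0\le i\le l-2$) and that the palindrome relation is invoked only at an index in $\{1,\dots,l-1\}$. The underlying conceptual observation is that the symmetry of the partial quotients lets the forward recurrence applied to the ``left'' factor $h_{i}$ be matched by a single backward step on the ``right'' factor $h_{l-1-i}$, which is precisely what the displayed identity encodes; an alternative route would be to invoke the Euler continuant identity together with the reversibility $K(a_{1},\dots,a_{m})=K(a_{m},\dots,a_{1})$, but the direct induction above is more self-contained.
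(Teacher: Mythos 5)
Your proof is correct and follows essentially the same route as the paper's: induction on $i$ using the recurrence \eqref{rr} together with the palindrome relation $a_{i+1}=a_{l-1-i}$, with the base case $i=0$ handled by the conventions $h_{0}=1$, $h_{-1}=0$. The only cosmetic difference is that you collapse the $(i+1)$-expression back to the $i$-expression, whereas the paper expands the $i$-expression forward; the algebra is identical.
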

\begin{proof}
We proceed by induction. The statement is true for $i=0$ because $h_{0}=1$ and $h_{-1}=0$. Assume that it is true for $i$. By induction hypothesis and (\ref{rr}),
\begin{align*}
h_{l-1} & =  h_{i}(a_{l-1-i}h_{l-2-i}+h_{l-3-i})+h_{i-1}h_{l-2-i} \\
 & = h_{i}(a_{i+1}h_{l-2-i}+h_{l-3-i})+h_{i-1}h_{l-2-i} \qquad (\mbox{as }a_{l-1-i}=a_{i+1})\\
 & =  (a_{i+1}h_{i}+h_{i-1})h_{l-2-i} + h_{i}h_{l-3-i} \\
& = h_{i+1}h_{l-1-(i+1)}+h_{(i+1)-1}h_{l-2-(i+1)},
\end{align*}
showing that the relation holds for $i+1$ as long as $l-2-(i+1)\geq -1$, i.e., $i\leq l-2$.
\end{proof}
\noindent Later, we need the following special case of the above proposition obtained by putting $i=\frac{l}{2}-1$ where the length $l$ of the period length of the RCF of $\sqrt{p}$ is known to be even.
\begin{equation}
\label{id1}
h_{l-1} = h_{\frac{l}{2}-1}h_{\frac{l}{2}} + h_{\frac{l}{2}-2}h_{\frac{l}{2}-1}=h_{\frac{l}{2}-1}\Big(h_{\frac{l}{2}} + h_{\frac{l}{2}-2}\Big)=h_{\frac{l}{2}-1}c_{\frac{l}{2}-1},
\end{equation}
where we write $c_{\frac{l}{2}-1}=h_{\frac{l}{2}} + h_{\frac{l}{2}-2}$ for brevity.
\noindent Note that the relation is valid for $l=2$, with the convention that $h_{-1}=0$.


\section{The fundamental unit of $\mathbb{Q}(\sqrt{p})$}
It is well known by Dirichlet's theorem that the units in the ring of integers of a real quadratic field form an abelian group of rank one, and the smallest unit $>1$ is referred to as the fundamental unit. Let
$$\eta_{p}=x+y\sqrt{p}\in \mathbb{Z}[\sqrt{p}]$$ denote the fundamental unit of the real quadratic field $K=\mathbb{Q}(\sqrt{p})$, where $p$ is a prime congruent to $3$ modulo $4$ as before. The fundamental unit $\eta_{p}$ is intimately connected with the convergents of $\sqrt{p}$. It is well-known that
\begin{equation}
\label{funda1}
\eta_{p}= k_{l-1}+h_{l-1}\sqrt{p}=nh_{l-1}+h_{l-2}+h_{l-1}\sqrt{p}.
\end{equation}

When $p\equiv 3$ mod $4$, $-1$ is not a quadratic residue of $p$, and hence the norm of $\eta_{p}$ can not be $-1$. Therefore, $x^{2}-py^{2}=1$. As shown in \cite{Leg}, we can deduce from $(x+1)(x-1)=py^{2}$ that either \begin{equation}
\label{minus}
x+1=pb^{2}, \qquad x-1 =a^{2} \quad\mbox{and  } a^{2}-pb^{2}=-2,
 \end{equation}
 or
 \begin{equation}
\label{plus}
x+1=a^{2}, \qquad x-1 =pb^{2} \mbox{ and  } \quad a^{2}-pb^{2}=2.
 \end{equation}
\begin{rmk}
\label{rk}
Since $a^{2}+3b^{2}\equiv 2$ mod $4$, $a$ and $b$ must both be odd, hence so is $y=ab$. Clearly, the integers $a$ and $b$ must be coprime. For a prime $p\equiv 3$ mod $4$, $-2$ is a quadratic residue of $p$ if and only if $p\equiv 3$ mod $8$. Therefore, (\ref{minus}) holds precisely when $p\equiv 3$ mod $8$ and (\ref{plus}) holds precisely when $p\equiv 7$ mod $8$.
\end{rmk}

\begin{prop}
\label{main1}
Let $x+y\sqrt{p}$ be the fundamental unit of $\mathbb{Q}(\sqrt{p})$ for a prime $p\equiv 3$ mod $4$. Then
$a=c_{\frac{l}{2}-1}$ and $b=h_{\frac{l}{2}-1}$ in the notation above. In particular,
$$ c_{\frac{l}{2}-1}^{2}-h_{\frac{l}{2}-1}^{2}=(-1)^{\frac{p+1}{4}}2.$$
\end{prop}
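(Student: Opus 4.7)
The plan is to derive both assertions from the midpoint factorization $h_{l-1}=h_{\frac{l}{2}-1}c_{\frac{l}{2}-1}$ proved in (\ref{id1}), by matching it against the coprime factorization $y=ab$ obtained in Remark \ref{rk}.

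First I would verify that $\gcd(c_{\frac{l}{2}-1},h_{\frac{l}{2}-1})=1$. Using the recurrence (\ref{rr}) to rewrite $h_{\frac{l}{2}}=a_{\frac{l}{2}}h_{\frac{l}{2}-1}+h_{\frac{l}{2}-2}$, one has
\[
c_{\frac{l}{2}-1}=h_{\frac{l}{2}}+h_{\frac{l}{2}-2}=a_{\frac{l}{2}}\,h_{\frac{l}{2}-1}+2\,h_{\frac{l}{2}-2},
\]
so $\gcd(c_{\frac{l}{2}-1},h_{\frac{l}{2}-1})$ divides $2\gcd(h_{\frac{l}{2}-2},h_{\frac{l}{2}-1})$, and the latter equals $1$ by (\ref{rec1}). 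Since $y=h_{l-1}=h_{\frac{l}{2}-1}c_{\frac{l}{2}-1}$ is odd by Remark \ref{rk}, both factors must be odd, forcing $\gcd(c_{\frac{l}{2}-1},h_{\frac{l}{2}-1})=1$.

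With coprimality in hand, the two factorizations $y=ab=h_{\frac{l}{2}-1}c_{\frac{l}{2}-1}$ into coprime positive integers must agree as unordered pairs, so $\{a,b\}=\{c_{\frac{l}{2}-1},h_{\frac{l}{2}-1}\}$. To decide which is which, the relation $a^{2}-pb^{2}=\pm 2$ gives $a^{2}\ge pb^{2}-2\ge 3b^{2}-2\ge b^{2}$, hence $a\ge b$ with equality only when $a=b=1$, which forces $p=3$. On the other side, the convergent denominators satisfy $h_i>h_{i-1}$ for $i\ge 2$, so $c_{\frac{l}{2}-1}=h_{\frac{l}{2}}+h_{\frac{l}{2}-2}>h_{\frac{l}{2}-1}$ when $\frac{l}{2}\ge 2$; when $\frac{l}{2}=1$ we have $c_0=a_1\ge 1=h_0$, with equality forcing $a_1=1$ and thus again $p=3$. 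Therefore $a=c_{\frac{l}{2}-1}$ and $b=h_{\frac{l}{2}-1}$ in every case (the degenerate case $p=3$ is consistent since both pairs collapse to $(1,1)$).

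Finally, substituting these identifications into (\ref{minus}) or (\ref{plus}) according as $p\equiv 3$ or $7\pmod 8$,
\[
c_{\frac{l}{2}-1}^{\,2}-p\,h_{\frac{l}{2}-1}^{\,2}=a^{2}-pb^{2}=\begin{cases}-2 & \text{if }p\equiv 3\pmod 8,\\+2 & \text{if }p\equiv 7\pmod 8,\end{cases}
\]
and a direct parity check of $(p+1)/4$ shows the right-hand side equals $(-1)^{(p+1)/4}\cdot 2$. The only step that requires any delicacy is the matching in the previous paragraph, especially the isolated case $p=3$ where both factorizations coincide; the rest is an unwinding of identities already established in the excerpt.
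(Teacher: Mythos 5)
There is a genuine gap at the heart of your argument. The step ``with coprimality in hand, the two factorizations $y=ab=h_{\frac{l}{2}-1}c_{\frac{l}{2}-1}$ into coprime positive integers must agree as unordered pairs'' is false as a general principle: an integer typically admits many factorizations into two coprime factors (for instance $105=3\cdot 35=15\cdot 7=21\cdot 5$), so knowing $\gcd(a,b)=1$ and $\gcd(h_{\frac{l}{2}-1},c_{\frac{l}{2}-1})=1$ does not by itself force $\{a,b\}=\{h_{\frac{l}{2}-1},c_{\frac{l}{2}-1}\}$. Establishing exactly this identification is the entire content of Proposition \ref{main1}, and it is where the paper does all its work: it shows that a prime $q$ divides $a$ if and only if $q$ divides $c_{\frac{l}{2}-1}$, by reducing the sequence $h_i$ modulo $q$ and propagating the congruence $h_{\frac{l}{2}}\equiv \mp h_{\frac{l}{2}-2}$ forward through the palindromic symmetry $a_{i}=a_{l-i}$ to obtain $h_{l-2}\equiv \pm 1 \bmod q$, and then comparing with $h_{l-2}\equiv \mp 1 \bmod q$ coming from $x\pm 1=nh_{l-1}+h_{l-2}\pm 1$ together with (\ref{minus}) or (\ref{plus}). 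That comparison of signs in turn uses Golubeva's theorem that $l\equiv 2 \pmod 4$ when $p\equiv 3\pmod 8$ and $4\mid l$ when $p\equiv 7\pmod 8$ — input that is entirely absent from your sketch. Once the prime divisors of $a$ and of $c_{\frac{l}{2}-1}$ are known to coincide (and likewise for $b$ and $h_{\frac{l}{2}-1}$), coprimality does give $a=c_{\frac{l}{2}-1}$, $b=h_{\frac{l}{2}-1}$; but without that divisor-matching step your size comparison ($a\ge b$, $c_{\frac{l}{2}-1}\ge h_{\frac{l}{2}-1}$) decides nothing, since the pair $\{a,b\}$ has not been tied to $\{h_{\frac{l}{2}-1},c_{\frac{l}{2}-1}\}$ at all.

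The parts of your sketch that do work are peripheral: your coprimality argument (writing $c_{\frac{l}{2}-1}=a_{\frac{l}{2}}h_{\frac{l}{2}-1}+2h_{\frac{l}{2}-2}$, using (\ref{rec1}) for $\gcd(h_{\frac{l}{2}-1},h_{\frac{l}{2}-2})=1$ and oddness of $y$) is correct and in fact a little cleaner than the paper's Lemma \ref{cp}, and the final sign determination via Remark \ref{rk} matches the paper. If you want a route that avoids the paper's congruence computation, you would need a different substantive input — for example, that $|a^{2}-pb^{2}|=2<\sqrt{p}$ forces $a/b$ to be a convergent of $\sqrt{p}$, after which one must still identify which convergent — but as written the proposal assumes the conclusion at its key step.
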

\noindent Note that
\begin{equation}
\label{yl}
y=ab=h_{l-1}=h_{\frac{l}{2}-1}c_{\frac{l}{2}-1}.
\end{equation}
We first prove the following lemma.
\begin{lem}
\label{cp}
$h_{\frac{l}{2}-1}$ and $c_{\frac{l}{2}-1}$ are co-prime.
\end{lem}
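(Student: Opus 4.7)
The plan is to combine two standard facts: consecutive denominators of the convergents are coprime, and the fundamental solution $y$ is odd. I would proceed in three short steps.

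First, a direct induction on the recurrence (\ref{rr}) (which is valid for all $i\geq 0$ under the conventions $h_{-1}=0$, $h_0=1$) shows $\gcd(h_i,h_{i-1})=1$ for every $i\geq 0$; equivalently, this is an immediate consequence of $k_i h_{i-1}-k_{i-1}h_i=\pm 1$ from (\ref{rec1}).

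Second, I would rewrite $c_{\frac{l}{2}-1}$ by substituting the recurrence for $h_{\frac{l}{2}}$: since $h_{\frac{l}{2}}=a_{\frac{l}{2}}h_{\frac{l}{2}-1}+h_{\frac{l}{2}-2}$,
\[
c_{\frac{l}{2}-1}\;=\;h_{\frac{l}{2}}+h_{\frac{l}{2}-2}\;=\;a_{\frac{l}{2}}\,h_{\frac{l}{2}-1}\;+\;2\,h_{\frac{l}{2}-2}.
\]
Hence any common divisor of $h_{\frac{l}{2}-1}$ and $c_{\frac{l}{2}-1}$ must divide $2\,h_{\frac{l}{2}-2}$. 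Combined with the coprimality from the first step, such a common divisor cannot have any odd prime factor, so it must divide $2$.

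Third, I would rule out $2$ by a parity argument. Identity (\ref{id1}) gives $h_{l-1}=h_{\frac{l}{2}-1}\,c_{\frac{l}{2}-1}$, while (\ref{funda1}) identifies $y=h_{l-1}$. By Remark \ref{rk}, $y=ab$ with $a,b$ both odd, so $h_{l-1}$ is odd; therefore both factors $h_{\frac{l}{2}-1}$ and $c_{\frac{l}{2}-1}$ are odd, and the gcd is exactly $1$.

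I do not foresee any real obstacle; the only point to be careful about is that Remark \ref{rk} is applied without invoking Proposition \ref{main1} (whose proof will use the present lemma). This is legitimate, since the oddness of $a$ and $b$ follows directly from the decompositions (\ref{minus}) and (\ref{plus}) and does not require the identification $a=c_{\frac{l}{2}-1}$, $b=h_{\frac{l}{2}-1}$.
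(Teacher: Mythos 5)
Your proposal is correct and follows essentially the same route as the paper: both hinge on rewriting $c_{\frac{l}{2}-1}=a_{\frac{l}{2}}h_{\frac{l}{2}-1}+2h_{\frac{l}{2}-2}$ so that a common divisor must divide $2h_{\frac{l}{2}-2}$, and both dispose of the factor $2$ via the oddness of $y=h_{\frac{l}{2}-1}c_{\frac{l}{2}-1}$ from Remark \ref{rk}. The only cosmetic difference is that you invoke coprimality of consecutive denominators (via (\ref{rec1}) or induction), while the paper re-derives that fact by descending the recurrence (\ref{rr}) down to $h_2$ and $h_1$; your observation that Remark \ref{rk} is independent of Proposition \ref{main1}, so no circularity arises, is also exactly the point the paper implicitly relies on.
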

\begin{proof}
Suppose $d$ is the greatest common divisor of $h_{\frac{l}{2}-1}$ and $c_{\frac{l}{2}-1}=h_{\frac{l}{2}}+h_{\frac{l}{2}-2}$. By (\ref{yl}), $d$ must be odd since $y$ is odd (see Remark \ref{rk}). We have
\begin{align}
h_{\frac{l}{2}}+h_{\frac{l}{2}-2} - a_{\frac{l}{2}}h_{\frac{l}{2}-1}& \equiv 0 \mbox{  mod  }d \nonumber \\
\Rightarrow  2h_{\frac{l}{2}-2} & \equiv 0 \mbox{  mod  }d \;\;\;\mbox{ by } (\ref{rr}).
\end{align}
It follows that $d$ must divide $h_{\frac{l}{2}-2}$. But $d$ is also a divisor of $h_{\frac{l}{2}-1}$ and  $h_{\frac{l}{2}}+h_{\frac{l}{2}-2}$. Hence $d$ divides all three consecutive convergents $h_{\frac{l}{2}}$, $h_{\frac{l}{2}-1}$ and $h_{\frac{l}{2}-2}$. From the recurrence relation (\ref{rr}), it follows that $d$ divides $h_{\frac{l}{2}-3}$, $h_{\frac{l}{2}-4}$ and so on. By going backwards, we can conclude $d$ divides $h_{2} = 1 + a_{1}a_{2}$ and $h_{1} = a_{1}$, i.e. $d=1$.
\end{proof}
{\underline{Proof of Proposition \ref{main1}}}: In view of (\ref{yl}) and Lemma \ref{cp}, it is enough to show that $q\mid a$ if and only of $q\mid c_{\frac{l}{2}-1}$. We first consider $p\equiv 3$ mod $8$. By a result of Golubeva (\cite{G1}), the period length $l$ of the RCF of $\sqrt{p}$ is of the form $4\lambda +2$.

Suppose $q$ is a prime dividing $c_{\frac{l}{2}-1}$. Then using (\ref{rr}) repeatedly,
 \begin{align*}
 h_{\frac{l}{2}} & = - h_{\frac{l}{2}-2} \mbox{ mod } q,\\
h_{\frac{l}{2}+1} & = a_{\frac{l}{2}+1}h_{\frac{l}{2}} + h_{\frac{l}{2}-1} \equiv  -a_{\frac{l}{2}-1}h_{\frac{l}{2}-2} + h_{\frac{l}{2}-1} = h_{\frac{l}{2}-3} \text{ mod } q, \\
h_{\frac{l}{2}+2}
& = a_{\frac{l}{2}+2} h_{\frac{l}{2}+1} + h_{\frac{l}{2}} \equiv a_{\frac{l}{2}-2} h_{\frac{l}{2}-3} - h_{\frac{l}{2}-2} = -h_{\frac{l}{2}-4} \text{ mod } q.
\end{align*}
Note that we are getting the positive sign on the right hand side of the congruence exactly when the index on the left hand side is even.  By continuing in this way, we find that
\begin{equation}
\label{deb2}
h_{l-2}  \equiv h_{0} =1\text{ mod } q.
\end{equation}
Suppose the prime $q$ does not divide $a$. In view of (\ref{yl}), $q$ must divide $b$. By (\ref{funda1}) and (\ref{minus}),  $$pb^{2}=x+1=nh_{l-1}+h_{l-2}+1.$$
\noindent It follows that
 \begin{equation}
 \label{deb4}
 h_{l-2}  \equiv -1\text{ mod } q.
  \end{equation}
By (\ref{deb2}) and (\ref{deb4}), $q$ must be $2$ but that is absurd as $b$ is odd. Thus, $q$ divides $a$.

Conversely, suppose $q$ is a prime dividing $a$. By (\ref{funda1}), (\ref{minus}) and (\ref{yl}),  $$a^{2}=x-1=nh_{l-1}+h_{l-2}-1=nab+h_{l-2}-1.$$
\noindent It follows that
 \begin{equation}
 \label{deb6}
 h_{l-2}  \equiv 1\text{ mod } q.
  \end{equation}
\noindent If possible, suppose $q$ does not divide $c_{\frac{l}{2}-1}$. Then $q$ must divide $h_{\frac{l}{2}-1}$ by (\ref{yl}). Using (\ref{rr}) repeatedly, we have
\begin{align*}
 h_{\frac{l}{2}} & = a_{\frac{l}{2}}h_{\frac{l}{2}-1} + h_{\frac{l}{2}-2}\equiv h_{\frac{l}{2}-2} \mbox{ mod } q,\\
h_{\frac{l}{2}+1} & = a_{\frac{l}{2}+1}h_{\frac{l}{2}} + h_{\frac{l}{2}-1} = a_{\frac{l}{2}-1}h_{\frac{l}{2}} + h_{\frac{l}{2}-1} \equiv a_{\frac{l}{2}-1}h_{\frac{l}{2}-2} - h_{\frac{l}{2}-1} = -h_{\frac{l}{2}-3} \text{ mod } q \nonumber \\
h_{\frac{l}{2}+2}
& = a_{\frac{l}{2}+2} h_{\frac{l}{2}+1} + h_{\frac{l}{2}} \equiv -a_{\frac{l}{2}-2} h_{\frac{l}{2}-3} + h_{\frac{l}{2}-2} = h_{\frac{l}{2}-4} \text{ mod } q.
\end{align*}
Note that we are getting the positive sign on the rightmost term exactly when the index on the left hand side is odd.  By continuing in this way, we find that
\begin{equation}
\label{deb0}
h_{l-2}  \equiv h_{0} =-1\text{ mod } q.
\end{equation}
By (\ref{deb6}) and (\ref{deb0}), $q$ must be $2$ which is absurd as $a$ is odd. Therefore, any prime dividing $a$ must divide $c_{\frac{l}{2}-1}$.

Since $(a,b)=1=(h_{\frac{l}{2}-1}, c_{\frac{l}{2}-1})$, we must have $a=c_{\frac{l}{2}-1}$, and hence $b=h_{\frac{l}{2}-1}$ by (\ref{yl}). The argument in the case of $p\equiv 7$ mod $8$ is identical, noting that the period length of $\sqrt{p}$ is divisible $4$ by a result of Golubeva (\cite{G1}) in this case.
\quad $\square$\\

Later we need the following corollary, the latter part of which is mentioned without proof in \cite{G2}. This result may be well-known in literature, nevertheless we include a proof that follows immediately from Proposition \ref{main1}.
\begin{cor}
\label{central}
The central term $a_{\frac{l}{2}}$ in the RCF of $\sqrt{p}$ is odd, and is either $n=\lfloor\sqrt{p}\rfloor$ or $n-1$.
\end{cor}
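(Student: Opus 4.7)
The plan is to derive both assertions from Proposition \ref{main1}. For the parity claim, note that by Proposition \ref{main1} and Remark \ref{rk}, both $c_{l/2-1}$ and $h_{l/2-1}$ are odd. Using the recurrence (\ref{rr}) at the central index,
\begin{equation*}
c_{l/2-1} \;=\; h_{l/2}+h_{l/2-2} \;=\; a_{l/2}\,h_{l/2-1} + 2\,h_{l/2-2}.
\end{equation*}
Reducing modulo $2$ gives $1\equiv a_{l/2}\cdot 1 + 0\pmod 2$, so $a_{l/2}$ is odd.

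For the size of $a_{l/2}$, the strategy is to square this identity and combine it with the identity $c_{l/2-1}^{2}-p\,h_{l/2-1}^{2}=\pm 2$ from Proposition \ref{main1}. After rearranging (using $a_{l/2}\,h_{l/2-1}+h_{l/2-2}=h_{l/2}$) this produces the key relation
\begin{equation*}
(p-a_{l/2}^{2})\,h_{l/2-1}^{2} \;=\; 4\,h_{l/2-2}\,h_{l/2}\;\mp\; 2.
\end{equation*}
When $l\ge 4$ the right-hand side is strictly positive (since $h_{l/2-2}\ge 1$ and $h_{l/2}\ge 2$), so $p>a_{l/2}^{2}$, giving $a_{l/2}\le n$. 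For $l=2$ the identity collapses to $a_{1}^{2}=p\mp 2$; the sign $+2$ would force $a_{1}>n$, contradicting the standard bound on non-terminal partial quotients of $\sqrt{p}$, while the sign $-2$ together with $p\ge n^{2}+2$ forces $a_{1}=\sqrt{p-2}=n$.

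For the lower bound $a_{l/2}\ge n-1$, I would argue by contradiction: assume $a_{l/2}\le n-2$. Since $p\equiv 3\pmod 4$ and $p$ is prime, $p\ge n^{2}+2$, so $p-a_{l/2}^{2}\ge 4n-2$. Substituting $h_{l/2}\le(n-2)\,h_{l/2-1}+h_{l/2-2}$ into the key relation and setting $t=h_{l/2-2}/h_{l/2-1}\in[0,1]$, I would arrive at
\begin{equation*}
(2n-1)\;-\;2(n-2)\,t\;-\;2\,t^{2} \;\le\; \frac{1}{h_{l/2-1}^{2}}.
\end{equation*}
For $n\ge 2$ the left-hand side is decreasing in $t$ on $[0,1]$ and equals $1$ at $t=1$, so the inequality forces $t=1$ and $h_{l/2-1}=1$; this can only occur when $l=4$ and $a_{1}=1$. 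In this edge case the key relation becomes $p=(a_{2}+2)^{2}\mp 2$: the subcase $p\equiv 7\pmod 8$ yields $a_{2}=n-1$ (contradicting $a_{2}\le n-2$), while $p\equiv 3\pmod 8$ forces $p=n^{2}+2$ with $n$ odd, whose continued fraction is $[n;\overline{n,2n}]$ of period $2$, again contradicting $l=4$.

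The main obstacle is precisely this degenerate edge case $l=4$, $a_{1}=1$, where the quadratic inequality collapses to equality and the algebraic bound alone fails; eliminating it requires the separate direct verification that $\sqrt{n^{2}+2}$ has period $2$. Once both bounds are in place, combining $a_{l/2}\in\{n-1,n\}$ with the parity conclusion selects the odd element of $\{n-1,n\}$, as claimed.
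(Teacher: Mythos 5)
Your proof is correct, and your parity argument is exactly the paper's: both read $c_{\frac{l}{2}-1}=a_{\frac{l}{2}}h_{\frac{l}{2}-1}+2h_{\frac{l}{2}-2}$ modulo $2$ using Proposition \ref{main1} and Remark \ref{rk}. For the bound $a_{\frac{l}{2}}\in\{n-1,n\}$ you rely on the same input but execute it differently. The paper quotes the standard bound $a_i\le n$, supposes $a_{\frac{l}{2}}<\sqrt{p}-2$, and reaches $a^2-pb^2<-4$ in a three-line estimate that uses $h_{\frac{l}{2}-2}<h_{\frac{l}{2}-1}$; you instead square the relation to get the exact identity $(p-a_{\frac{l}{2}}^2)h_{\frac{l}{2}-1}^2=4h_{\frac{l}{2}-2}h_{\frac{l}{2}}\mp 2$ and analyse the resulting quadratic inequality in $t=h_{\frac{l}{2}-2}/h_{\frac{l}{2}-1}$, at the cost of treating $l=2$ separately and of a degenerate case $h_{\frac{l}{2}-2}=h_{\frac{l}{2}-1}=1$ (i.e.\ $l=4$, $a_1=1$), for which you need the computation that $\sqrt{n^2+2}$ has period $2$ (alternatively Golubeva's $l\equiv 2\bmod 4$ for $p\equiv 3\bmod 8$ kills that subcase). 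Your route buys two small things: the upper bound $a_{\frac{l}{2}}\le n$ falls out of the identity for $l\ge 4$ instead of being quoted, and the configuration $h_{\frac{l}{2}-2}=h_{\frac{l}{2}-1}$, which the paper's strict inequality tacitly excludes, is handled explicitly (in that configuration the paper's display only yields $a^2-pb^2<0$, which still contradicts the sign $+2$ forced by $4\mid l$, i.e.\ $p\equiv 7\bmod 8$, but this is not spelled out there); the paper's route buys brevity. All of your steps check out, including $p\ge n^2+2$ for $p\equiv 3\bmod 4$, the monotonicity argument forcing $t=1$ and $h_{\frac{l}{2}-1}=1$, and the elimination of both subcases of $p=(a_2+2)^2\mp 2$.
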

\begin{proof}
By Proposition \ref{main1} and Remark \ref{rk}, both  $a=h_{\frac{l}{2}}+h_{\frac{l}{2}-2}$ and $b=h_{\frac{l}{2}-1}$ are odd. Considering parity in $a= a_{\frac{l}{2}}h_{\frac{l}{2}-1}+2h_{\frac{l}{2}-2}$, we find that $a_{\frac{l}{2}}$ must be odd.

It is well known that $a_{i}\leq n$ for all $i=1,\; \ldots, \;l$. If possible, let $a_{\frac{l}{2}}<\sqrt{p}-2$. Then $p > a_{\frac{l}{2}}^{2} + 4a_{\frac{l}{2}} + 4$. We have
\begin{align*}
a^{2}-pb^{2} & < (a_{\frac{l}{2}}h_{\frac{l}{2}-1} +2h_{\frac{l}{2}-2})^{2} -(a_{\frac{l}{2}}^{2} + 4a_{\frac{l}{2}} + 4)h_{\frac{l}{2}-1}^{2}\\
  & = 4h_{\frac{l}{2}-2}^{2}- 4h_{\frac{l}{2}-1}^{2} + 4a_{\frac{l}{2}}h_{\frac{l}{2}-1}(h_{\frac{l}{2}-2}-h_{\frac{l}{2}-1})\\
& < -4 \qquad (\mbox{  since  } h_{\frac{l}{2}-2} < h_{\frac{l}{2}-1}).
 \end{align*}
But the final inequality contradicts  $a^{2}-pb^{2}=\pm 2$. Therefore, we must have $\sqrt{p} - 2 < a_{\frac{l}{2}} \leq n,$
and $a_{\frac{l}{2}}$ is either $n$ or $n-1$.
\end{proof}

\section{Families of $p$ satisfying Mordell's Conjecture}
The following theorem allows us to confirm that Mordell's conjecture holds when the RCF of $\sqrt{p}$ has period length $2$, $4$, $6$ or $8$.
\begin{thm}
\label{mord}
 Let $x+y\sqrt{p}$ denote the fundamental unit of the real quadratic field $\mathbb{Q}(\sqrt{p})$, where $p$ is a prime congruent to $3$ modulo $4$. Then $p$ divides $y$ if and only if $p$ divides  $h_{\frac{l}{2}-1}$.
\end{thm}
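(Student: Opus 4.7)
The plan is to read the theorem as an almost immediate corollary of Proposition \ref{main1}. By (\ref{yl}), we have the factorization
\[
y \;=\; h_{l-1} \;=\; h_{\frac{l}{2}-1}\,c_{\frac{l}{2}-1},
\]
so $p\mid h_{\frac{l}{2}-1}$ trivially implies $p\mid y$. The only content lies in the reverse direction.

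For the reverse direction, I would assume $p\mid y$. Using the factorization above and primality of $p$, either $p\mid h_{\frac{l}{2}-1}$ (which is what we want) or $p\mid c_{\frac{l}{2}-1}$. The strategy is to rule out the second possibility by invoking the identity
\[
c_{\frac{l}{2}-1}^{2}-p\,h_{\frac{l}{2}-1}^{2} \;=\; (-1)^{\frac{p+1}{4}}\,2
\]
established at the end of Proposition \ref{main1}. Indeed, $p\mid c_{\frac{l}{2}-1}$ would force $p$ to divide the right-hand side, which is $\pm 2$; since $p\equiv 3\pmod{4}$ is odd (in particular $p>2$), this is impossible. Hence $p\mid h_{\frac{l}{2}-1}$, as required.

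There is no real obstacle here once Proposition \ref{main1} is in hand; the substantive work was done in proving that $a=c_{\frac{l}{2}-1}$ and $b=h_{\frac{l}{2}-1}$ satisfy $a^{2}-pb^{2}=\pm 2$. The theorem is essentially the observation that $p\mid ab$ forces $p\mid b$, because $p\mid a$ would contradict the norm equation $a^{2}-pb^{2}=\pm 2$. I would write the argument out in a few lines and refer explicitly to (\ref{yl}) and to the norm identity from Proposition \ref{main1}.
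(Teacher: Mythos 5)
Your argument is correct and is essentially identical to the paper's proof: both directions follow from the factorization $y=h_{\frac{l}{2}-1}c_{\frac{l}{2}-1}$ in (\ref{yl}) together with the identity $c_{\frac{l}{2}-1}^{2}-ph_{\frac{l}{2}-1}^{2}=\pm 2$ from Proposition \ref{main1}, with $p\mid c_{\frac{l}{2}-1}$ ruled out because it would force $p\mid 2$. No gaps; the paper argues exactly this way.
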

\begin{proof}
By Proposition \ref{main1}, we have
\begin{equation}
\label{deb5}
 c_{\frac{l}{2}-1}^{2}- ph_{\frac{l}{2}-1}^{2}=\pm 2,
 \end{equation}
where $y=h_{\frac{l}{2}-1}c_{\frac{l}{2}-1}$. Suppose $p$ divides $y$. If $p$ does not divide $h_{\frac{l}{2}-1}$, it has to divide $c_{\frac{l}{2}-1}$. By (\ref{deb5}), $p$ must divide $2$, which is absurd.
\end{proof}
\begin{cor}
\label{mord2}
Let $p$ be a prime such that the RCF of $\sqrt{p}$ has period length $2$, $4$ or $6$. Then Mordell's conjecture holds for the fundamental unit of $\mathbb{Q}(\sqrt{p})$.
\end{cor}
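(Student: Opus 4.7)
My plan is to invoke Theorem \ref{mord}, which reduces Mordell's assertion to the non-divisibility $p\nmid h_{\frac{l}{2}-1}$, and then to handle the three period lengths $l\in\{2,4,6\}$ separately. The only ingredient needed beyond Theorem \ref{mord} is the standard bound $1\leq a_{i}\leq n=\lfloor\sqrt{p}\rfloor$ on the partial quotients in the RCF of $\sqrt{p}$, together with the recurrence formulas for $h_{0},h_{1},h_{2}$ already recorded in the paper.

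For $l=2$, the relevant index is $\frac{l}{2}-1=0$ and $h_{0}=1$, so the claim is immediate. For $l=4$ I would use $h_{1}=a_{1}$, and observe $1\leq a_{1}\leq n<\sqrt{p}<p$, which makes divisibility by $p$ impossible.

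The case $l=6$ is where a small arithmetic argument is required, and I expect this to be the main (though still very mild) obstacle. Here $h_{2}=1+a_{1}a_{2}$, so $2\leq h_{2}\leq 1+n^{2}$. Since $p$ is a non-square prime we have $n^{2}\leq p-1$, leaving only the borderline possibility $h_{2}=p$, i.e.\ $n^{2}=p-1$. This is precisely where the hypothesis $p\equiv 3\pmod{4}$ enters: such an equality would force $n^{2}\equiv 2\pmod{4}$, which is impossible as integer squares lie in $\{0,1\}$ modulo $4$. Hence $n^{2}\leq p-2$, giving $2\leq h_{2}\leq p-1$, and therefore $p\nmid h_{2}$.

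Combining the three cases with Theorem \ref{mord} yields $p\nmid y$, which is Mordell's conjecture for the fundamental unit of $\mathbb{Q}(\sqrt{p})$ in each of the three period lengths. Nothing deeper than Theorem \ref{mord} and the trivial bound $a_{i}\leq n$ is needed for this corollary; the sharper bound of Proposition \ref{per6} is reserved for the period $8$ case treated in Corollary \ref{mord8}.
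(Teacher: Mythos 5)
Your proof is correct, and the cases $l=2$ and $l=4$ coincide with the paper's. The difference is in the $l=6$ case: the paper invokes the sharper estimate $h_{2}\leq 2n$ of Proposition \ref{per6}, which gives $h_{2}\leq 2n\leq n^{2}<p$ at once, whereas you use only the trivial bound $a_{1},a_{2}\leq n$, so that $2\leq h_{2}=1+a_{1}a_{2}\leq 1+n^{2}\leq p$, and then eliminate the single borderline case $h_{2}=p$ (which forces $a_{1}=a_{2}=n$ and $n^{2}=p-1$) by the congruence argument: $p\equiv 3\pmod 4$ would give $n^{2}\equiv 2\pmod 4$, impossible. Your route is more elementary and self-contained for this corollary, at the cost of using the hypothesis $p\equiv 3\pmod 4$ in an extra place (one could also close that gap without the congruence by noting that $p=n^{2}+1$ has RCF of period length $1$, not $6$); the paper's route requires proving Proposition \ref{per6}, but that bound is needed anyway for the period-$8$ case in Corollary \ref{mord8} and holds for every prime, so the authors simply reuse it. Both arguments are complete and rest on the same reduction via Theorem \ref{mord}.
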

\begin{proof}
When $l=2$, $h_{\frac{l}{2}-1}=h_0 = 1$. By Theorem \ref{mord}, $p$ cannot divide $y$.

When $l=4$, $\sqrt{p}=\langle n; \overline{\alpha, \beta, \alpha, 2n}\rangle$ and it is well known that $\alpha< \sqrt{p}$. Therefore, $h_{\frac{l}{2}-1}=h_1 = \alpha $ cannot be divisible by $p$.

When $l=6$, we have $\sqrt{p} = \langle{n; \overline{\alpha,\beta,\gamma,\beta,\alpha,2n}\rangle}$.
We show that $h_2 \leq 2n$ in the following proposition (Proposition \ref{per6}). Granting that, we have
$$h_{\frac{l}{2}-1}=h_2 \leq 2n \leq n^{2}<p.$$
\noindent Hence $p$ cannot divide $y$ by the theorem above.
\end{proof}

\begin{prop}
\label{per6}
Let $\sqrt{p} = \langle{n; \overline{\alpha,\beta,\ldots}\rangle}$.
Then $h_2 \leq 2n$
\end{prop}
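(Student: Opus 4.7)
The plan is to introduce the standard continued-fraction auxiliaries $P_i, Q_i$ attached to $\sqrt{p}$, with $P_1 = n$, $Q_1 = p - n^2$, $a_i = \lfloor (P_i+\sqrt{p})/Q_i\rfloor$, $P_{i+1} = a_i Q_i - P_i$, and $Q_{i+1}Q_i = p - P_{i+1}^2$. In this notation $\alpha = a_1$, $\beta = a_2$, and by the explicit formula in the paper $h_2 = 1 + \alpha\beta$. The very definition of $\alpha$ together with $\lfloor \sqrt{p}\rfloor = n$ and integrality immediately gives $\alpha Q_1 \leq n + \lfloor\sqrt{p}\rfloor = 2n$, so $P_2 = \alpha Q_1 - n \leq n$.

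The algebraic engine of the proof is the identity
\[ Q_2 \;=\; 1 + \alpha\bigl(2n - \alpha Q_1\bigr), \]
obtained by substituting $P_2 = \alpha Q_1 - n$ into $Q_1 Q_2 = p - P_2^2$ and using $Q_1 = p - n^2$. The ellipsis in ``$\overline{\alpha,\beta,\ldots}$'' says that $\beta$ is not the period's terminating partial quotient $2n$, and I would convert this directly into $Q_2 \geq 2$: for if $Q_2 = 1$, then $Q_1 Q_2 = p - P_2^2$ forces $P_2 = n$, whence $a_2 = \lfloor P_2 + \sqrt{p}\rfloor = 2n$, a contradiction. Feeding $Q_2 \geq 2$ back into the identity yields
\[ Q_2 \;\geq\; 1 + \alpha \qquad\text{and}\qquad P_2 \;\leq\; n - 1. \]

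The last step uses $\beta = \lfloor (P_2+\sqrt{p})/Q_2\rfloor$: integrality of $\beta Q_2$ together with $\lfloor\sqrt{p}\rfloor = n$ gives $\beta Q_2 \leq P_2 + n \leq 2n - 1$. Combining this with $Q_2 - 1 \geq \alpha$,
\[ \alpha\beta \;\leq\; (Q_2 - 1)\beta \;=\; \beta Q_2 - \beta \;\leq\; (2n-1) - 1 \;=\; 2n - 2, \]
so $h_2 = 1 + \alpha\beta \leq 2n - 1 < 2n$, in fact slightly sharper than claimed.

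The main conceptual obstacle is recognizing why the ellipsis hypothesis is essential: for $l = 2$ the proposition outright fails (e.g.\ $p = 11$ gives $\sqrt{11} = \langle 3; \overline{3,6}\rangle$ and $h_2 = 19 > 6 = 2n$), so any proof must at some point use that the period does not close after $\beta$. Once this is packaged as $Q_2 \geq 2$, the proof reduces to the short chain of elementary inequalities above.
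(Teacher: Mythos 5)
Your proof is correct and is essentially the paper's argument rewritten in the standard $P_i,Q_i$ formalism: your identity $Q_2=1+\alpha(2n-\alpha Q_1)$ and the bound $\beta Q_2\le P_2+n$ are exactly the paper's computations $p-(n-r_1)^2=t(1+\alpha r_1)$ and $2n-r_1=(1+\alpha r_1)\beta+r_2$ (with $t=Q_1$, $r_1=2n-\alpha Q_1$, $P_2=n-r_1$, $Q_2=1+\alpha r_1$), and your exclusion of $Q_2=1$ plays the same role as the paper's remark that $r_1=0$ would force $\sqrt{p}=\langle n;\overline{\alpha,2n}\rangle$. The only differences are cosmetic: you make the period-length-$2$ caveat explicit (e.g.\ $p=11$) and end up with the marginally sharper bound $h_2\le 2n-1$.
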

\begin{proof}
We have $p=n^{2}+t, \;\;t\leq 2n$ and
$$\sqrt{p}=n+\sqrt{p}-n=n+\frac{1}{\frac{\sqrt{p}+n}{p-n^{2}}}=n+\frac{1}{\frac{2n+(\sqrt{p}-n)}{t}},
\qquad 0<\sqrt{p}-n<1.$$
 Now, the coefficient $\alpha$ in the continued fraction of $\sqrt{p}$ is given by
 \begin{equation*}2n=t\alpha+r_{1}, \;\;\quad 0<r_{1}<t.\end{equation*}
Note that $r_{1}=0$ would imply that $\sqrt{p}=\langle{n;\overline{\alpha,2n}}\rangle$. Now,
$$\sqrt{p}= n+\frac{1}{\frac{t\alpha+(\sqrt{p}-(n-r_{1}))}{t}}
=n+\frac{1}{\alpha+\frac{1}{\frac{t(\sqrt{p}+(n-r_{1}))}{p-(n-r_{1})^{2}}}}.$$
We observe that
\begin{equation*}
\begin{split}
p-(n-r_{1})^{2}& =(p-n^{2})+2nr_{1}-r_{1}^{2}=t+(t\alpha +r_{1})r_{1}-r_{1}^{2}=t(1+\alpha r_{1}),\\
t(\sqrt{p}+(n-r_{1}))& =t(2n-r_{1}+\sqrt{p}-n).
\end{split}
\end{equation*}
It follows that the next coefficient $\beta$ in the continued fraction of $\sqrt{p}$ is given by
\begin{equation*}
2n-r_{1}=(1+\alpha r_{1})\beta+r_{2}, \;\;\;\;0\leq r_{2}<1+\alpha r_{1}.\end{equation*}
As $r_{1}\geq 1$, we conclude from the last equality that $\alpha\beta<2n$ and $h_{2}=\alpha\beta+1\leq 2n$.
\end{proof}

\begin{cor}
\label{mord8}
Let $p$ be a prime such that the RCF of $\sqrt{p}$ has period length $8$. Then Mordell's conjecture holds for the fundamental unit of $\mathbb{Q}(\sqrt{p})$.
\end{cor}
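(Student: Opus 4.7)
By Theorem \ref{mord}, it suffices to prove $p\nmid h_3$; we establish the stronger bound $h_3<p$. Since $l=8\equiv 0\pmod 4$, the Golubeva result used in the proof of Proposition \ref{main1} forces $p\equiv 7\pmod 8$, so Proposition \ref{main1} gives $c_3^2-ph_3^2=2$, where $c_3=h_4+h_2=a_4 h_3+2h_2$. By Corollary \ref{central}, $a_4\in\{n-1,n\}$; by Proposition \ref{per6}, $h_2\le 2n$; and a short unrolling of (\ref{rr}) gives $k_3=nh_3+(a_2a_3+1)$.

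The key step is showing $k_3=c_3$. Writing $\sqrt p=(k_3\alpha+k_2)/(h_3\alpha+h_2)$, where $\alpha$ denotes the periodic tail of the continued fraction beginning after $a_3$, and using the identity $k_3 h_2-k_2 h_3=1$ from (\ref{rec1}), a direct computation yields $\alpha=(P+\sqrt p)/R$ with $R:=k_3^2-ph_3^2$ and $P:=ph_2 h_3-k_2 k_3$; in particular $P$ is a positive integer less than $\sqrt p$, so $P\le n$. The palindromic symmetry of the period-$8$ expansion corresponds precisely to the relation $a_4 R=2P$ between the two halves of $\alpha$'s expansion, from which $R\le 2n/a_4\le 2n/(n-1)<3$ for $n\ge 4$. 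On the other hand, if $R=1$, then $(k_3,h_3)$ would be a positive solution to the Pell equation $x^2-py^2=1$ smaller than the fundamental solution $(k_7,h_7)$, a contradiction. Therefore $R=2$, so $k_3^2=ph_3^2+2=c_3^2$, and $k_3=c_3$ as both are positive.

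Substituting $k_3=c_3$ into the formulas above gives $(n-a_4)h_3=1+a_2(2a_1-a_3)$. If $a_4=n$, then $a_2(a_3-2a_1)=1$, forcing $a_2=1$ and $a_3=2a_1+1$; hence $h_3=(2a_1+1)(1+a_1)+a_1=2a_1^2+4a_1+1$, and combining with $a_3\le n$ (so that $a_1\le(n-1)/2$) yields $h_3\le\tfrac12(n^2+2n-1)\le n^2<p$. If $a_4=n-1$, then directly $h_3=1+a_2(2a_1-a_3)\le 1+2a_1a_2\le 2h_2-1\le 4n-1$, which is $<p$ once $n\ge 4$. The few small cases $n\le 3$ are handled by inspection, and no prime with period~$8$ occurs in that range.

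The main obstacle is proving $R=2$: the naive bound $h_3\le a_3 h_2+a_1\le 2n^2+n$ easily exceeds $p$, so one genuinely needs the sharp equality $R=2$, extracted from the palindromic identity $a_4 R=2P$.
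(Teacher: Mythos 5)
Your argument is correct, and it takes a genuinely different route from the paper's. The paper only obtains the crude bound $h_3<2n^2+2n<3p$ (via Proposition \ref{per6}), so it must eliminate $h_3=2p$ by a parity argument and $h_3=p$ by a case analysis on $\delta=a_4$: a size estimate when $a_4=n-1$, and when $a_4=n$ a congruence analysis mod $p$ and mod $8$ ($2h_2^2=\lambda p+1$, $\lambda\in\{1,7\}$) that reduces to $p=n^2+6$ and a short finite verification ($p=7$, $p=127$, and $17^2+6$ composite). You instead prove the stronger statement $h_3<p$ by pinning down the half-period complete quotient: in the standard notation your $R$ and $P$ are $Q_4=k_3^2-ph_3^2$ and $P_4=ph_2h_3-k_2k_3$, the palindrome gives $P_5=P_4$ and hence $2P_4=a_4Q_4$, and combining $P_4\le n$, $a_4\ge n-1$ (Corollary \ref{central}) with the exclusion of $Q_4=1$ (minimality of the fundamental solution) forces $Q_4=2$; then $k_3=c_3$ by Proposition \ref{main1}, and the exact identity $(n-a_4)h_3=1+a_2(2a_1-a_3)$ bounds $h_3$ by $\tfrac12(n^2+2n-1)$ or $4n-1$, both below $p$ (I checked your identities on $p=31$ and $p=71$; they hold, with $Q_4=2$ and $k_3=c_3$ in both). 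What your approach buys is a cleaner and stronger conclusion with no finite case check; what it costs is reliance on classical facts about the complete quotients $\frac{P_i+\sqrt p}{Q_i}$ that the paper never develops, namely $0<P_i<\sqrt p$ and the half-period symmetry you summarize as ``palindromic symmetry corresponds precisely to $a_4R=2P$.'' That relation is standard (it follows from $P_{l-i+1}=P_i$, $Q_{l-i}=Q_i$ and the recursion $P_{i+1}+P_i=a_iQ_i$, as in Perron's treatment of quadratic irrationals), but as written it is asserted rather than proved, so you should either include this short derivation or cite a precise reference; with that in place your proof is complete, whereas the paper's proof stays entirely within the machinery it sets up at the price of the $\lambda$-analysis and the small-prime checks.
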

\begin{proof}
Let $p$ be a prime such that
$$\sqrt{p}=\langle{n; {\overline{\alpha, \beta, \gamma, \delta, \gamma, \beta, \alpha, 2n}}}\rangle.$$
By Corollary \ref{central}, $\delta$ is the odd number in $\{n,\;n-1\}$. Since $l=8$, we know that $p\equiv 7$ mod $8$ (\cite{G1}). By Proposition (\ref{main1}),
\begin{equation}
\label{new1}
c_{3}^{2}-ph_{3}^{2}=2.
\end{equation}By Theorem \ref{mord}, $p$ divides $y$ if and only if $p$ divides $h_{3}$. Here
$$h_{3}=\gamma h_{2}+h_{1}=\alpha\beta\gamma+\gamma+\alpha, \qquad c_{3}=h_{4}+h_{2}=\delta h_{3}+2h_{2}.$$  By Proposition \ref{per6} and the fact that $\alpha, \beta, \gamma\leq n$, we have
\begin{equation}
    \label{bd1}
h_{2}\leq 2n, \qquad h_{3}<2n^2 + 2n<3p.
\end{equation}
If $p$ divides $h_{3}$, then $h_{3}$ must be either $p$ or $2p$. If $h_{3}=2p$ it follows from (\ref{new1})  that
$c_{3}^{2}\equiv 2$ mod $4$ which is absurd. Therefore, $p$ divides $h_3$ implies
\begin{equation}
    \label{new2}
h_{3}=p, \qquad c_{3} = \delta h_3 +2h_2 = \delta p +2h_2 .
\end{equation}
When $\delta=n-1$, we have
$$c_3 =p(n-1)+2h_2 <p\sqrt{p}-(p-4n)<p\sqrt{p}-2 \mbox{ for }p\geq 23 $$ and consequently, $c_{3}^{2} < p^{3}-2$ which contradicts (\ref{new1}).  For $p< 23$, we can directly check that $p$ does not divide $h_3$.

Now we need only to rule out the case $\delta = n$ which occurs when $n$ is odd. By substituting from (\ref{new2}) in (\ref{new1}), we obtain
\begin{align}
\label{c}
(np+2h_2 )^2-p^{3} & = 2\\
\implies 4h_{2}^{2}& \equiv 2 \mbox{ mod } p \nonumber \\
\label{bd2} \implies \lambda p + 1 & = 2h_{2}^{2}\leq 8n<8p
\end{align}
From (\ref{bd2}), it follows that $\lambda$ must be an odd positive integer not exceeding $8$. Considering $2h_{2}^{2}=\lambda p +1 \equiv 7\lambda+1$ mod $8$, we can conclude that $\lambda=1$ if $h_2$ is even and $\lambda=7$ when $h_{2}$ is odd. Substituting $2h_{2}^{2}=\lambda p +1$ in (\ref{c}), we obtain
\begin{equation}
\label{cruc}
 4nh_{2} = p(p-n^{2})-2\lambda, \qquad \lambda\in\{1,\;7\}.
\end{equation}
When $p-n^{2}\geq 9$, it follows from (\ref{cruc}) that $$4nh_{2}>8p>8n^{2}, \mbox{ i.e. } h_{2}>2n$$ which is absurd by (\ref{bd1}).

It remains to consider $p-n^2 <9$. Since $n$ is odd, and $p\equiv 7$ mod $8$, we need only restrict to the case $p=n^{2}+6$. By (\ref{cruc}), $n$ must divide $p^{2}-2\lambda=(n^{2}+6)^{2}-2\lambda$ where $\lambda = 1$ or $7$. Therefore, $n$ must be an odd factor of $34$ or $22$, i.e., $p=1^2+6$, $17^2 +6$ or $11^2 +6$. We can easily check that $p$ does not divide $h_{3}$ for $p=7$ or $p=127$, and note that $17^{2}+6$ is not a prime. We can conclude that $p$ cannot divide $h_{3}$. By Theorem \ref{mord}, $p$ cannot divide $y$ when the RCF of $\sqrt{p}$ has period $l=8$.
\end{proof}

By the conjecture of P. Chowla and S. Chowla mentioned in the introduction, there exist infinitely many primes $p$ such that $\sqrt{p}$ has period $2$, $4$, $6$ or $8$. Hence by Corollary \ref{mord2}, we obtain four conjecturally infinite families of primes for which Mordell's Conjecture holds. The existence of infinitely many primes $p$ with $\sqrt{p}$ having RCF of period length $2$, $4$ and $6$ is predicted by Bouniakowsky's conjecture (\cite{Bou}) as well. Bouniakowski's conjecture states that if $f(x)$ is an irreducible polynomial in $\mathbb{Z}[x]$ with positive leading coefficient such that $gcd\{f(n)\mid n\in \mathbb{N}\}=1$, then $f(n)$ takes infinitely many prime values as $n$ runs over natural numbers. One can readily check that any prime $p$ such that $\sqrt{p}$ has RCF of period length $2$ must be of the form $p=n^{2}+2$. Similarly, one can check that any prime $p$ such that $\sqrt{p}$ has RCF of period length $4$ must be of the form $p=(n+1)^{2}-2$. While one can not characterize primes $p$ such that  $\sqrt{p}$ has RCF of period length $6$, it can be shown for primes $p=(6k+4)^{2}+4k+3=36k^2 +52 k +19$, the RCF of $\sqrt{p}$ has period length $6$. The polynomials $x^{2}+2$, $(x+1)^{2}-2$ and $36x^2 +52 x +19$ clearly satisfy the hypotheses of Bouniakowski's conjecture. Thus, three families of primes satisfying Mordell's conjecture in Corollary  \ref{mord2} are infinite by Bouniakowski's Conjecture as well.

As the conjecture of Mordell has been verified for all primes $p<10^{7}$ (\cite{Beach}), we provide a few examples of primes $p>10^{7}$ that satisfy the conjecture. \\

\begin {table}[h!]
\begin{center}
\begin{tabular}{|c|c|c|}
\hline
$p=(n+1)^{2}-2$ & $\sqrt{p}=\langle{n,\overline{1,n-1,1,2n}}\rangle$ & $\xi_{p}=x+y\sqrt{p}$ \\
\hline
\hline
$  10017223$ & $\langle{3164,\overline{1,3163,1,6328}}\rangle$ & $10017224+3165\sqrt{10017223} $\\
\hline
$  20948927$ & $\langle{4576,\overline{1,4575,1,9152}}\rangle$ & $20948928+4577\sqrt{20948927} $\\
\hline
$  21003887$ & $\langle{4582,\overline{1,4581,1,9164}}\rangle$ & $21003888+4583\sqrt{21003887} $\\
\hline
$  21022223$ & $\langle{4584,\overline{1,4583,1,9168}}\rangle$ & $21022224+4585\sqrt{21022223} $\\
\hline
\end{tabular}
\end{center}
\end{table}

\end{document}